\pgfplotsset{compat=1.5}
\newtheorem{theorem}{Theorem}
\newtheorem{lemma}[theorem]{Lemma}
\newtheorem{proposition}[theorem]{Proposition}
\theoremstyle{plain}
\DeclareMathOperator{\Ber}{Ber}
\DeclareMathOperator{\Bin}{Bin}
\DeclareMathOperator{\Geo}{Geo}
\DeclareMathOperator{\E}{\mathbf{E}}
\newcommand{\f}{\frac}
\renewcommand{\P}{\mathbf P}
\newcommand{\ak}{\lceil \alpha k \rceil}
\title{Frog model wakeup time on the complete graph}
\author[N. Cartern]{Nikki Cartern}
\address{University of Portland}
\email{cartern16@up.edu}
\author[B. Dygert]{Brittany Dygert}
\address{Seattle Pacific University}
\email{bddygert@gmail.com}
\author[M. Junge]{Matthew Junge}
\address{University of Washington}
\email{jungem@math.wasington.edu}
\author[S. Lacina]{Stephen Lacina}
\address{University of Oklahoma}
\email{stephen.d.lacina-1@ou.edu}
\author[C. Litterell]{Collin Litterell}
\address{University of Washington}
\email{collin.litterell@gmail.com}
\author[A. Stromme]{Austin Stromme}
\address{University of Washington}
\email{austinjstromme@gmail.com}
\author[A. You]{Andrew You}
\address{Duke University}
\email{andrew.kh.you@gmail.com}
\begin{document}
\maketitle

\begin{abstract}

The frog model is a system of random walks where active particles set sleeping particles in motion. On the complete graph with $n$ vertices it is equivalent to a well-understood rumor spreading model. We given an alternate and elementary proof that the wake-up time, i.e.\ the expected time for every particle to be activated, is $\Theta(\log n)$. Additionally, we give an explicit distributional equation for the wakeup time as a weighted sum of geometric random variables. This project was part of the University of Washington Research Experience for Undergraduates program.

\end{abstract}

\section{Introduction}

The \emph{frog model} starts with an awake frog at the root of a graph and one sleeping frog at each other vertex. In discrete time, awake frogs perform nearest neighbor simple random walks and wake any sleeping frogs they encounter. When first introduced by K. Ravishankar about twenty years ago, the model was known as the egg-model (see \cite{telcs1999}); R. Durrett is
credited with the zoomorphism of viewing particles as frogs. This
likely comes from the chaotic way particles wake up. We study the model on the complete graph with $n$ vertices, $K_n$. In particular we deduce that the \emph{wakeup time} $T_n$, the time for all frogs to wake up, has expected value on the order of $\log n$.

It was brought to our attention in the final stages of this project 
 that the wakeup time for the frog model on $K_n$ is equivalent to a rumor spreading model introduced in \cite{original}. This model starts with a town of $n$ people where one knows a rumor. At each time step those who know the rumor call a uniformly random resident and inform them.  The frog model can be naturally coupled with the spread of the rumor so that the number of awake frogs is the same as the number of informed residents. Hence the wakeup time is equivalent to all $n$ residents knowing the rumor. We remark that the locations of the frogs are an extra bit of randomness not accounted for in the rumor spreading model. For this reason, the coupling only works on $K_n$.

In \cite{original} they show that $T_n/ \log_2 n \overset{\mathbf P} \to 1 + \log n + O(1)$ which implies our theorem. 
The idea of the proof in \cite{original} is to break up the spread of the rumor into five stages (see the appendix for a list of the stages). For example, phase one is the time to wake up $N$ frogs with $N$ some large fixed constant. A finer analysis in \cite{second} shows that $T_n = \log_2 n + \log n + O(1)$ in probability. They show that waking is closely approximated by a deterministic equation (see the appendix). A tight analysis for $\E T_n$ is in \cite{third}, where they use three phases and some sophisticated estimates to show that $\E T_n = c \log n + b + o(1)$ with $c = 1+ (1/ \log(2))\approx 2.44$ and $b= 2.765$. Our result is less precise, but the proof is more elementary. We use two phases and only rely on couplings and Markov's inequality.

Although it is equivalent to the spread of a rumor on $K_n$, the wakeup time for the frog model has otherwise not been studied.
 The recent article \cite[Open Question 5]{poisson} introduces the problem on finite $d$-ary trees. A survey article \cite{frogs} asks a similar question for a variant of the frog model where frogs perish after taking $t$ steps. They propose a study of the minimal $t$ that guarantees at least half the frogs on a given graph will be activated with probability greater than $1/2$.
It is claimed (without proof) that this value on the complete graph is $O(\log n)$. Though not equivalent, this is closely related to our result for the frog model cover time.

On finite graphs the frog model is a model for epidemics, or the spread of a rumor. The article \cite{Coop09} describes many related variants. It also appears in physics literature as a model for combustion \cite{combustion} known as $A + B \mapsto 2A$, where we replace awake and sleeping frogs with flames and fuel, respectively. The combustion process is studied on $\mathbb Z^d$. Noteworthy theorems include the fact that the origin is visited infinitely often for all $d \geq 1$ \cite{telcs1999} and a shape theorem that says, when properly scaled, the set of activated vertices converges to a convex region in the unit simplex (\cite{shape, random_shape}).

A model that is in a loose sense the reverse of the frog model is coalescing random walk. Introduced in \cite{erdos1974}, this is the process that starts with a particle at each site and when particles collide they coalesce into one. Like the frog model, this is typically studied in $\mathbb Z^d$. For instance, coalescing random walk is recurrent for all $d \geq 1$. This was first shown in \cite{first}, and refined further in \cite{griffeath, kesten_number, arratia, arratia2}. Computer science literature studies coalescing random walk on finite graphs. Of particular interest is the \emph{coalescance time}; the expected time for all particles to coalesce into a single particle. In \cite{cox} they study this on the torus. The recent paper \cite{cs} gives bounds on rather general graphs.

Our result should be compared with the cover time for multiple random walks on a graph (\cite{Elsässer20112623} and \cite{Alon07})). The basic question is how the cover time is reduced by using the combined ranges of $k$ random walks. This is studied on a variety of different graphs, and the speedup depends on the graph structure. For the complete graph, \cite{Alon07} cites the folklore (we give a proof in \thref{lem:couplings} \ref{loop}) that the speedup is linear, meaning the cover time for a single random walk is $k$ times the cover time for $k$ random walkers. All of the results for speedup of multiple walk take the worst-case scenario across every starting configuration for the $k$ walkers. The frog model is different in that we have only one possible starting configuration, and just one active particle. However, the placement of sleeping frogs is optimal in the sense that activated particles are more likely to be near unexplored sites. We ask a question regarding this in Further Questions (i).

\subsection{Main theorem and overview} \label{sec:main}

Before stating the theorem we review \emph{asymptotic notation}. We say that $f(n) = O(g(n))$ if there exists $c >0$ such that for all sufficiently large $n$ it holds that $f(n) \leq c g(n).$ We write $f(n) = \Theta(g(n))$ if $f(n) = O(g(n))$ and $g(n) = O(f(n))$. The wakeup time, $T_n$, is formally defined in the next section. 

\begin{theorem} \thlabel{thm:main}

$\E T_n = \Theta(\log(n))$.

\end{theorem}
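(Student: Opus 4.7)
The plan is to prove matching bounds of order $\log n$. The lower bound is immediate and deterministic: each awake frog wakes at most one new frog per step, so the awake count $S_t$ satisfies $S_{t+1} \leq 2 S_t$. Starting from $S_0 = 1$, this forces $S_t \leq 2^t$, hence $T_n \geq \log_2 n$ always and $\E T_n \geq \log_2 n$.

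For the upper bound I would split the process at $T_1 := \min\{t : S_t \geq n/2\}$, handling growth and mop-up separately. Conditional on $S_t = k$, each awake frog sits at a vertex outside the sleeping set $W_t$ and chooses a uniform neighbor, so each $w \in W_t$ is missed with probability $(1-\tfrac{1}{n-1})^k$. Using $1 - e^{-x} \geq x/2$ for $x \in [0,1]$, for $k \leq n/2$ one obtains
\[
\E[S_{t+1} - S_t \mid S_t = k] \;=\; (n-k)\bigl(1 - (1-\tfrac{1}{n-1})^k\bigr) \;\geq\; k/4,
\]
so $\E[S_{t+1}/S_t \mid \mathcal{F}_t] \geq 5/4$, while $S_{t+1}/S_t \in [1,2]$ always.

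To convert multiplicative growth into a bound on $\E T_1$, I would use that $\log_2 x \geq x - 1$ on $[1,2]$ (the chord through $(1,0)$ and $(2,1)$). This gives $\E[\log_2(S_{t+1}/S_t) \mid \mathcal{F}_t] \geq 1/4$ whenever $t < T_1$, so $\log_2 S_{t \wedge T_1} - (t \wedge T_1)/4$ is a submartingale. Optional stopping combined with $\log_2 S_t \leq \log_2 n$ gives $\E T_1 \leq 4 \log_2 n$. For the mop-up phase, set $m_t = n - S_t$; when $m_t \leq n/2$, one has $\E[m_{t+1} \mid m_t] = m_t(1 - \tfrac{1}{n-1})^{n - m_t} \leq e^{-1/2} m_t$. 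Iterating yields $\E[m_{T_1 + s}] \leq (n/2) e^{-s/2}$, and Markov's inequality gives $\P(T_n - T_1 > s) \leq (n/2) e^{-s/2}$; summing this geometric tail bounds $\E(T_n - T_1) = O(\log n)$, so $\E T_n = O(\log n)$.

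The main obstacle is the growth phase: the naive Jensen bound on $\log S_t$ has the wrong sign by concavity, so one needs an averaging inequality adapted to the bounded range of the multiplicative increment $S_{t+1}/S_t$. The chord bound above is the simplest workaround; alternatively one could couple with a supercritical branching process up to population $n/2$. The mop-up phase is then routine once the expected geometric contraction of $m_t$ is in place.
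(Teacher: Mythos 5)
Your proof is correct, and while it shares the paper's overall architecture (the deterministic doubling lower bound, and an upper bound split at the time half the frogs are awake), the technical route through both phases is genuinely different. For the growth phase, the paper avoids computing the one-step drift: it passes to $K_n^\circ$, sequentializes the $k$ awake frogs' jumps, and applies Markov's inequality to a sum of geometric waiting times to show that an $\alpha$-fraction of new frogs is woken with probability at least a uniform $p_*>0$, yielding the stochastic domination $\tau_{n/2} \preceq \sum_1^{n_*} \Geo(p_*)$ with $n_* = O(\log n)$. You instead compute $\E[S_{t+1}-S_t\mid S_t=k]$ exactly and handle the concavity obstruction for $\log S_t$ with the chord bound $\log_2 x \ge x-1$ on $[1,2]$ together with optional stopping for the submartingale $\log_2 S_{t\wedge T_1}-(t\wedge T_1)/4$; this is self-contained, needs no self-loops or modified waking rule, and gives the clean explicit constant $\E T_1 \le 4\log_2 n$. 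For the mop-up phase the paper freezes a batch of $n/2$ frogs and reduces to a coupon-collector computation on $K_n^\circ$, whereas you track the expected number of sleepers, which contracts by a factor $e^{-1/2}$ per step, and sum the Markov tail $\P(T_n-T_1>s)\le (n/2)e^{-s/2}$ (truncating the bound at $1$ for $s \le 2\log(n/2)$, which contributes the $O(\log n)$ term). Both arguments are elementary and every step of yours checks out; the paper's approach buys an explicit distributional domination of $T_n$ by sums of geometric random variables, while yours buys sharper and more transparent constants from the exact drift formula.
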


Additionally, in Section \ref{sec:formula} we give an explicit recursive formula for the distribution of $T_n$. This is in \thref{prop:formula}. The formula involves some sophisticated combinatorial objects and, combined with the formula in \cite{third} for $\E T_n$, yields a bound on their growth that could be of independent interest.

Most of our work is done on $K_n^\circ$, the complete graph with a self loop at each vertex. In \thref{lem:couplings} \ref{loop} we show the frog model on $K_n^\circ$ has a stochastically larger wakeup time.
We then prove \thref{thm:main} in two phases. First, we show
that it takes logarithmic time to wake the first $n/2$ frogs. This is done by embedding a process that grows slower than the frog model, but still (on average) grows exponentially. The idea is to, when say $k$ frogs are awake, only allow more frogs to wake up if at least $\alpha k$ asleep frogs are visited. This occurs with some probability $q_{k,n}$. We show in \thref{prop:p*} that $$\inf_{n \geq 3}\left( \min_{k < n/2} q_{k,n} \right) \geq  p_* >0.$$ This lower bound is obtained by having the $k$ awake frogs jump one at a time, and thinking of the number of single jumps to wake the $(i+1)$st frog as a Geometric with mean $\f{n-k-i}{n}$ waiting time. An application of Markov's inequality gives a uniform bound in terms of $\alpha$ for all $k< n/2$ and $n\geq 3$.

Next, thinking of each time an $\alpha$ proportion wakes up as a Bernoulli($q_{k,n}$) trial, the number of successes after $t$ steps is stochastically larger than a $\Bin(t,p_*)$ random variable.
Thus, the number of frogs awake at time $t$ is stochastically larger than
$$(1 + \alpha)^{\Bin(t,p_*)}.$$
Moreover, the time for this quantity to exceed $n/2$ is a sum of $O(\log n )$ geometric random variables with mean $p_*$. This is made formal in \thref{lem:couplings} \ref{key}. We can conclude that the expected time it takes to wake the first $n/2$ frogs is $O(\log n)$.

Once half the frogs are awake, we ignore the contribution of any new frogs added and show that $n/2$ frogs cover the remaining vertices in $O(\log n)$ steps. This is made precise in \thref{lem:couplings} \ref{loop} by reducing to the coupon collector problem.

\subsection{Further Questions}
The wakeup time for the frog model is a largely unexplored topic. There are many further questions one could ask.  We remark that \cite{frogs} and \cite{poisson} discuss a few other problems on finite graphs.


\begin{enumerate} [label = {(\roman*)}]

\item {It is interesting to compare the wakeup time for the frog model on $G$, a graph with $n$ vertices, to the cover time for $n$ independent random walks on $G$ started in the least optimal starting configuration. Perhaps the frogs being evenly spread might overcome the disadvantage of starting with only one awake particle.} \emph{Are there graphs for which the frog model wakeup time is faster than the cover time for $n$-multiple random walks?} {The full binary tree of height $n$ is a good candidate. The expected cover time with $2^{n}$ particles started at the same leaf is $O(n^2 (2/\sqrt 2)^n)$ (see \cite{bincov}), whereas the wakeup time is conjectured to be polynomial in $n$ (see \cite{poisson}).}

\item Let $G(n,p)$ denote an Erd{\H o}s-R{\'e}nyi graph (i.e. the random graph obtained by keeping each edge in $K_n$ with probability $p$). \emph{What is the wakeup time for the frog model on $G(n,p)$?}{ For fixed $p>0$, this should still with high probability be $O(\log n)$, but for $p_n$ decaying with $n$ the graph is sparser and the wakeup time is possibly larger.}

\item  \emph{What is the wakeup time for other graphs?} {For instance, the path, cycle, and grid.}


\end{enumerate}


\section{Formal model and couplings}

Here we give a formal definition of the frog model. Then we describe in \thref{lem:couplings} the five couplings we depend on in proving our main theorem.

\subsection{Formal definition of the frog model}

We borrow much of our notation from \cite{shape}. Let $V$ be the vertex set of $K_n$. Consider the collection, $\{F_v(t) \colon v \in V\}$, of independent random walks on $K_n$ each satisfying $F_v(0) = v$. These random walks correspond to the the trajectory of each frog. We now introduce stopping times to account for the waking up that occurs. Define
$$t(v,u) = \min_t \{F_v(t) = u\},$$
the time that the frog originally at vertex $v$ takes to reach vertex $u$. Also define
$$T(v,u) = \inf \left\{\sum_{i=1}^k t(v_{i-1},v_i) \colon v_0=v, \ldots, v_k = u \text{ for some } k\right\},$$
the first passage time from $v$ to $u$ in the frog model. Then $T(v_0,u)$ gives the time it takes for $u$ to be woken. For each $u \in V$, the position of the frog originally at $u$ at time $t$ is defined to be
$$P_u(t) = \begin{cases} u, &t \leq T(v_0,u) \\ F_u(t-T(v_0,u)), &t > T(v_0,u) \end{cases}.$$
With this we can define $\Lambda(t) = \{u \in V \colon T(v_0,u) \leq t\}$, the set of sites that have been visited by time $t$ or the set of awake frogs at time $t$. Define the number of frogs awake at time $t$ to be $N_t = |\Lambda(t)|$. Thus, the time to wake all of the frogs is $T_n = \inf \{t \colon N_t = n\}.$

\subsection{Couplings and stochastic dominance} \label{sec:dominance}

The frog model only depends on the underlying random walk trajectories. It has the nice feature that restricting the range of frogs, or ignoring woken frogs yields models with monotonically slower waking behavior. This is made formal using couplings and stochastic dominance.

Let $X$ and $Y$ be two random variables. If for each $a >0$  we have $\mathbf{P}[Y \geq a] \geq \mathbf{P}[X \geq a]$ then we say that $Y$ \emph{stochastically dominates} $X$, written $X\preceq Y$. A thorough reference on stochastic domination is \cite{SS}. Note that if $A \succeq B$, then $\mathbf{E} A \geq \mathbf{E}B$. An equivalent condition to stochastic dominance is that
$X\preceq Y$ if and only if there exists a coupling $(X,Y)$ with $X\leq Y$ a.s. Formally, a \emph{coupling} is a probability space with (possibly dependent) random variables $X'$ and $Y'$ that have the same distribution as $X$ and $Y$, respectively. Couplings can often be described intuitively and rigorously in words. In the following lemma we describe all of the couplings used in this paper.

\begin{lemma} \thlabel{lem:couplings} The following stochastic dominance relations hold:

    \begin{enumerate}[label = (\Roman*)]
        \item \label{binom} Let $\{q_i\}_{i=1}^t$ be a sequence in $[0,1]$ with $q_i > p$ for all $i=1,2,\hdots,, t$.  It holds that
        $$\sum_{i=1}^t \Ber(q_i) \succeq  \Bin(t,p).$$
        Here $\Ber(p)$ denotes a Bernoulli-$p$ random variable and $\Bin(t,p)$ is a binomial random variable with $t$ trials.

        \item \label{loop} Let $K_n^\circ$ be the complete graph with a self-loop added to each vertex. If $T_n^\circ$ is the wakeup time for the frog model on $K_n^\circ$, then
        $$T_n \preceq T_n^\circ.$$

        \item \label{breakdown} Let $\tau_{n/2}$ and $C_{n/2}$ be as defined in the proof of \thref{thm:main}. It holds that
        $$T_n \preceq \tau_{n/2} + C_{n/2}.$$

         \item \label{loop} Let $K_n^\circ$ be the complete graph with a self-loop added to each vertex. Define $C_k^\circ$ to be the time for $k$ random walks to collectively visit every vertex of $K_n^\circ$, and define $C_k$ analogously for $K_n$. It holds that
        \begin{align}
        C_{k} \preceq C_{k}^\circ \overset{d}= \frac {C_1^\circ} k \overset{d}= \frac 1 k \sum_{i=1}^n \Geo\left(\frac{n-i}{n}\right).\label{eq:Ck}
        \end{align}
Where $\Geo(p)$ is the number of Bernoulli-$p$ trials until a success occurs.
        \item \label{key} Consider a modified frog model on $K_n^\circ$ where, when $k$ frogs are active, more frogs wake up only if at least $\alpha k$ sleeping frogs are visited at the next step (for some fixed $\alpha >0$). When this occurs we select an arbitrary subset of $\lceil \alpha k\rceil $ of these frogs and allow them to wake up. The others remain asleep. Thus, on this event there are at least $(1+\alpha)k$ frogs awake. \thref{prop:p*} shows that waking at least $\alpha k$ frogs occurs with probability at least $p_* >0$ for any $k$ and and sufficiently large $n$. We then have
        $$(1 + \alpha)^{\Bin(t,p_*)} \preceq N_t,$$
        and for $\tau_{n/2} := \inf\{t \colon N_t \geq n/2\}$ and $n_*:= \left\lfloor{\frac{\log(n/2)}{\log(1+ \alpha)}} \right\rfloor$ we have
        \begin{align*}\tau_{n/2} &\preceq \inf\{ t \colon (1 + \alpha)^{\Bin(t,p_*)} \geq n/2 \}
\overset{d} = \sum_1^{n_*} \Geo(p_*).
\end{align*}
    \end{enumerate}

\end{lemma}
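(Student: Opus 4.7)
The plan is to verify the five claims separately; the first four are quick coupling arguments while (V) is the substance. For (I), I would use the standard monotone coupling: draw i.i.d.\ uniforms $U_1,\ldots,U_t$ on $[0,1]$ and set $X_i=\mathbf{1}[U_i\leq q_i]$ and $Y_i=\mathbf{1}[U_i\leq p]$; since $q_i>p$, $X_i\geq Y_i$ pointwise and their sums satisfy the required dominance. For (II) and the first inequality of (IV), I would view a step of a walk on $K_n^\circ$ as a lazy version of a step on $K_n$: with probability $1/n$ the $K_n^\circ$ walker stays put, and with the complementary probability it moves to a uniformly chosen non-self vertex. Couple the $K_n$ walker to take exactly that non-self step while idling on the self event; inductively, at each time $t$ the set of vertices ever visited by a $K_n^\circ$-walker is contained in the set visited by its $K_n$-partner, and so $T_n\leq T_n^\circ$ and $C_k\leq C_k^\circ$ almost surely.

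For the remaining equalities in (IV), I would use that steps on $K_n^\circ$ are i.i.d.\ uniform samples from $V$. Interleaving the steps of $k$ walkers round-robin realizes $k$ one-walker steps as one $k$-walker step, giving $C_k^\circ \overset{d}= C_1^\circ/k$. The last identity is coupon collector: once $i-1$ distinct vertices have been sampled, the wait for a new one is $\Geo((n-i+1)/n)$, and summing over $i$ gives the claimed sum (up to an index shift). For (III), I would realize the two sides on a common space by running the true frog model until $\tau_{n/2}$, at which point $n/2$ frogs are awake. Then I pretend that all subsequent activations in the true model are irrelevant and treat the frozen $n/2$ trajectories as $n/2$ independent random walks. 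These cover the rest of $V$ in at most $C_{n/2}$ further steps; since ignoring helpers only delays waking, this upper bounds the remaining time in the true model and gives $T_n\leq \tau_{n/2}+C_{n/2}$ pointwise.

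The core is (V). I would couple the slowed model to the true one so that the slowed-active set is always a subset of the true-active set. At each time, reveal the jumps of the slowed-active frogs using the same randomness in both models and use independent randomness for the extra true-active frogs. If fewer than $\alpha k$ sleeping frogs are struck by slowed-active jumps, the slowed model activates none while the true model activates whichever are hit; otherwise declare a success, pick $\lceil\alpha k\rceil$ hit sleeping frogs arbitrarily, and activate exactly those in the slowed model. By \thref{prop:p*}, each step is a success with probability at least $p_*$ while $k<n/2$, independently of the past, so by (I) the number $S_t$ of successes in $t$ steps stochastically dominates $\Bin(t,p_*)$. The slowed active count starts at $1$ and multiplies by at least $1+\alpha$ on every success, so after $t$ steps it is at least $(1+\alpha)^{S_t}$, giving $(1+\alpha)^{\Bin(t,p_*)}\preceq N_t$. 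Translating to waiting times, $\tau_{n/2}$ is bounded by the first $t$ with $\Bin(t,p_*)\geq n_*$, which is distributed as $\sum_1^{n_*}\Geo(p_*)$.

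The main obstacle will be making the coupling in (V) fully rigorous: I must argue that conditional on the slowed-active set having any particular size $k<n/2$, the distribution of the number of new hits at the next step depends only on $(k,n)$ and not on which specific vertices are currently active, so that the success events are genuinely i.i.d.\ Bernoulli trials with success probability at least $p_*$. Once that independence is established, (I) upgrades this to the binomial domination, and the rest is bookkeeping on geometric waiting times.
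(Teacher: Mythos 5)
Your proposal is correct and follows essentially the same route as the paper: the uniform-variable coupling for (I), the self-loop/lazy-walk coupling for (II) and the first inequality of (IV), the round-robin interleaving and coupon-collector decomposition for the rest of (IV), the ``freeze a batch of $n/2$ frogs'' coupling for (III), and the reduction of (V) to Bernoulli successes dominated via (I) and \thref{prop:p*}. The symmetry point you flag as the ``main obstacle'' in (V) is handled implicitly in the paper by the fact that on $K_n^\circ$ each frog's step is uniform over all $n$ vertices regardless of its position, so the hit distribution indeed depends only on the number of awake and asleep frogs.
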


\begin{proof} All of the proofs establish stochastic dominance via couplings.
    \begin{enumerate}[label = (\Roman*)]
           \item 
           Define $X = \sum_{i=1}^t\Ber(q_i)$ and $Y = \sum_{i=1}^t \Ber(p) = \Bin(t,p)$. Let $\{U_i\}_{i=1}^t$ be uniform $[0,1]$ random variables, so that
           \begin{align*}
               X &\overset{d} = \sum_{i=1}^t 1\{U_i \leq q_i\}, \\
Y & \overset{d} = \sum_{i=1}^t 1\{U_i \leq p\}.
               \end{align*}
               Our hypothesis $q_i > p$ guarantees that $X\geq Y$ for all realizations of the $U_i$. 

           \item 
           Pair the frogs on $K_n$ and $K_n^\circ$ in the natural way. Whenever a frog on $K_n^\circ$ moves to a new vertex, have the corresponding frog on $K_n$ follow it. In this way, the frogs on each graph perform random walks, but those on $K_n^\circ$ possibly spend extra steps traveling self-loops. This coupling ensures that $T_n \leq T_n^\circ$ in every realization of the model. 

           \item 
           Run the frog model up to time $\tau_{n/2}$. Of the $N_{\tau_{n/2}}$ frogs awake choose a batch of $n/2$ of them. Now think of this batch as paired to another frog model in the same configuration as ours at time $\tau_{n/2}$. Our $n/2$-batch frogs follow their counterparts. The time, $C_{n/2}$, it takes for the batch to visit all $n$ vertices of $K_n$ is at least as large as the $T_n - \tau_{n/2}$ steps taken by the frog model they are coupled with. In this way, the model restricted to a batch spends $\tau_{n/2} + C_{n/2}$ steps, which is at least the $T_n$ steps taken by the frog model.

           \item A similar coupling as in (II) gives $C_k \preceq C_k^\circ$. Observe that on $K_n^\circ$ every site is accessible in one step. Thus, the set of sites visited by $k$ random walks has the same law as the range of a single random walk in $k$ steps. It follows that $kC_k^\circ \overset{d} = C_1^\circ$. The last equality
           $$C_1^\circ \overset{d} = \sum_{i=1}^n \Geo\left( \frac{ n-i }{n}\right)$$
           follows from the observation that the waiting time for a single random walk on $K_n^\circ$ to increase its range from $i$ to $i+1$ is the waiting time to have a success in a sequence of Bernoulli($(n-i)/n$) trials. This is distributed as a $\Geo( (n-i)/n)$. As increases in the range are independent and skip-free, the claimed formula follows.
           \item By (II) we preserve the dominance relation by working on $K_n^\circ$. Since each successful increase in the number of frogs is a Bernoulli trial with probability at least $p_*$ it follows from (I) that this further dominates the random quantity $(1+ \alpha)^{\text{Bin}(t ,p_*)}$. The fact that $N_t$ dominates this quantity is a straightforward consequence of the fact that we are only ignoring frogs. And, the fact that $\tau_{n/2}$ is less than the time for $(1+ \alpha)^{\text{Bin}(t ,p_*)}$ to exceed $n/2$ follows immediately from the relationship between $(1+ \alpha)^{\Bin(t,p_*)}$ and $N_t$.

Taking the $\log$ of each side, we have the stopping time $\inf\{ t \colon (1+ \alpha)^{\Bin(t,p_*)} \geq n/2\}$ is equivalent to the time for a $\Bin(t,p_*)$ random variable to exceed $\log(n/2)/\log(1+ \alpha).$ The claimed distributional equality is just the fact that $\Bin(t,p_*)$ is a skip-free process which increases independently at each increment after $\Geo(p_*)$ steps.
    \end{enumerate}
\end{proof}

\section{Proving \thref{thm:main}}

We start by elaborating a bit on \thref{lem:couplings} \ref{key}. Let $\alpha <1$ be a yet to be chosen parameter. Define the probabilities $q_{k,n} = q_{k,n}(\alpha)$ that the frog model on $K_n^\circ$ with $k$ frogs awake wakes at least $\alpha k$ frogs in one time step. Let $p_*= p_*(\alpha) = \inf_{n \geq 3}\left( \min_{ k < n/2} q_{k,n}\right)$. Our first goal is to establish that $p_*$ is bounded away from 0.

\begin{proposition} \thlabel{prop:p*}
For $\alpha =  1/10$ it holds that $p_* \geq 1/37$.
\end{proposition}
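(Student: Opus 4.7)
The plan is to lower bound $q_{k,n}$ via the sequential-jump coupling described in the overview and then apply Markov's inequality. Working on $K_n^\circ$, I would let the $k$ awake frogs jump one at a time rather than simultaneously. Each jump is uniform over $V(K_n^\circ)$, so after $i$ new frogs have been woken the next single jump wakes another with probability $(n-k-i)/n$. Writing $J_m = \sum_{i=0}^{m-1} G_i$ with independent $G_i \sim \Geo((n-k-i)/n)$, the event that at least $\lceil \alpha k \rceil$ new frogs wake in a single step of the frog model is exactly $\{J_{\lceil \alpha k \rceil} \leq k\}$, so $q_{k,n} = \P(J_{\lceil \alpha k \rceil} \leq k)$.

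Next I compute $\E J_{\lceil \alpha k \rceil} = \sum_{i=0}^{\lceil \alpha k \rceil - 1} n/(n-k-i)$ and bound each summand uniformly. Since $i \leq \lceil \alpha k \rceil - 1 \leq \alpha k$ we have $k+i \leq (1+\alpha)k$, and combined with $k < n/2$ and $\alpha = 1/10$ this gives $n - k - i > (1-\alpha)n/2 = 9n/20$, so every summand is at most $20/9$. Therefore $\E J_{\lceil \alpha k \rceil} \leq \lceil \alpha k \rceil \cdot 20/9$, and Markov's inequality produces
$$q_{k,n} \;\geq\; 1 - \frac{\lceil \alpha k \rceil}{k}\cdot\frac{20}{9}.$$
For $k \geq 3$ the estimate $\lceil \alpha k \rceil/k \leq \alpha + 1/k \leq 1/10 + 1/3 = 13/30$ forces $q_{k,n} \geq 1 - (13/30)(20/9) = 1/27 > 1/37$.

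The Markov bound is vacuous for $k \in \{1,2\}$, so those two cases I would handle by direct computation. For $k=1$ and $n \geq 3$ the probability that the single jump lands on a sleeper is $(n-1)/n \geq 2/3$. For $k=2$ the constraint $k<n/2$ forces $n \geq 5$, and the probability that at least one of the two independent uniform jumps hits a sleeper is $1 - (2/n)^2 \geq 21/25$. Both comfortably exceed $1/37$, so taking the infimum across $k < n/2$ and $n \geq 3$ yields $p_* \geq 1/37$. The main obstacle is the interplay between Markov's inequality and the ceiling: when $k$ is very small the ratio $\lceil \alpha k \rceil/k$ is far from $\alpha$ and the Markov estimate becomes useless, but isolating $k \in \{1,2\}$ is the cleanest way around this.
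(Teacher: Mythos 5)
Your proof is correct and follows essentially the same route as the paper: decompose one step of the $k$ awake frogs into sequential jumps, represent the time to wake $\lceil \alpha k\rceil$ new frogs as a sum of geometric random variables, and apply Markov's inequality, treating small $k$ separately where the Markov bound degenerates. The only differences are bookkeeping — you bound each geometric mean uniformly by $20/9$ and isolate $k\in\{1,2\}$ rather than only $k=1$, which actually yields the slightly better constant $1/27$ for $k\ge 3$ while still giving $p_*\ge 1/37$.
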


\begin{proof}
It is useful to decompose one time step in the frog model with $k$ frogs awake on $K_n^\circ$ into $k$ steps by a single random walk. Notice that the number of steps by the single random walk to visit the first sleeping frog is the waiting time for a success in a sequence of $\Ber(\f{n-k}{n})$ trials (i.e.\ a $\Geo(\f{n-k}{n})$ random variable). Similarly, once $i$ of the $n-k$ frogs have been woken the waiting time is $\Geo(\f{n-k-i}{n})$. Let $X = X(k,n,\alpha) = \sum_{i=0}^{\ak} \Geo\left ( \tfrac{ n-k -i }{n} \right) .$ This represents the number of the $k$ awake frogs that jump in order to wake $\ak$ more.
It follows that $q_{k,n} = 1- \P[ X > k].$ And, by Markov's inequality
\begin{align}
q_{k,n} \geq 1 - \f{ \E X }{k}. \label{eq:lb}
\end{align}
We can use linearity and the fact that the mean of a $\Geo(p)$ random variable is $1/p$ to estimate $\E X$:
\begin{align*}
\E X = \sum_{i=0}^{ \ak } \f{ n }{ n - k -i } &= n \sum_{i=n- \ak }^{ n } \f {1} { i }  \leq n \f{n - (n- \ak)}{n - \ak} =  \f{ n \ak }{ n- \ak }.
\end{align*}
Since $\ak \leq \alpha k +1$ we can bound $\E X /k$ by
\begin{align*} \f{\E X}{k} &\leq \f{ \alpha k  +1}{k} \f{ n }{n - \alpha k -1} \\
			&= \f{ \alpha n }{n - \alpha k -1}+\f 1 k  \f{ n }{n - \alpha k -1}.
\end{align*}
Note that $q_{1,n} =1-\f 1 n \geq \f 23$ since we assume $n \geq 3$. Thus, we can work with $k \geq 2$. Also by assumption $k$ is no larger than $n/2$. We then preserve the above bound by setting $k=n/2$ in the negative terms and $k=2$ in the $\f 1 k$ term. This results in 
$$q_{k,n} \geq 1- \f{ \alpha n }{n - \alpha n/2 -1} - \f{ n }{2(n - \alpha n/2 -1)} = 1 - \f{ \alpha + \f 12} { 1 - \f \alpha 2 - 1/n}.$$
As $n \geq 3$ we arrive at
$p_* \geq 1 - \dfrac{\alpha + \f 12 }{ \f 23 - \f \alpha 2 }.$
If we evaluate at $\alpha = 1/10$, then $p_* \geq 1/37$, which completes the proof.
\end{proof}

\subsection{Proof of \thref{thm:main}}
With \thref{lem:couplings} and \thref{prop:p*} we can prove our main theorem.

\begin{proof}[Proof of \thref{thm:main}]

Since the number of frogs can at most double at each step we have $T_n \geq \log_2(n)$. The lower bound immediately follows. As for the asymptotic upper bound, let $\tau_{n/2} = \inf\{ t \colon N_t \geq n/2\}$ be the time to wake at least $n/2$ frogs, and let $C_{n/2}$ be the time for $n/2$ walkers to visit every vertex of $K_n$ (taken to be the maximum such time over all possible starting configurations of walkers). \thref{lem:couplings} \ref{breakdown} describes a coupling where we ignore the benefit of waking more frogs after time $\tau_{n/2}$ to conclude
\begin{align}T_n \preceq \tau_{n/2} + C_{n/2} .\label{eq:Tn}
\end{align}
Here `$\preceq$' denotes stochastic domination, see Section \ref{sec:dominance} for the definition.
The couplings in \thref{lem:couplings} \ref{loop} and  \ref{key} along with \thref{prop:p*} imply that there exists $\alpha, p_*>0$ such that
\begin{align*}
\tau_{n/2} \preceq  \sum_{i=1}^{ \left \lfloor \frac{ \log( n/2)  }{ \log(1+ \alpha)} \right \rfloor } \Geo(p_*)
 \quad \text{ and } \quad
 C_{n/2} &\preceq \frac{2}{n} \sum_{i=1}^{n} \Geo\left( \frac{ n-i} {n} \right).
\end{align*}
Using the fact that the expectation of a Geometric($p$), random variable is $1/p$ we can take the expectation of both sides of \eqref{eq:Tn} to obtain
$$\E T_{n} \leq  (1/p_*) \left \lfloor \frac{ \log( n/2)  }{ \log(1+ \alpha)} \right \rfloor + \frac{2} n  \sum_{i=1}^{n-1} \frac{n}{n-i} = O(\log n) .$$
Note the first summand above is $O(\log n)$ because $p_*$ and $\alpha$ are positive. The second summand is $O(\log n)$ by canceling the factors of $1/n$ and $n$ then comparing to the harmonic numbers $\sum_{i=1}^n \frac 1 i \approx \log n$.
\end{proof}


\section{Exact distribution of $T_n$} \label{sec:formula}
Let $\sigma_k = \sigma_k(n)$ be the time to wake up all $n$ frogs on the complete graph given that there are $k$ frogs currently awake. So, $\sigma_1 = T_n$.

\begin{proposition} \thlabel{prop:pformula}
Let $$p_{j,k} = \mathbf P[ \text{$k$ awake frogs visit $j$ sleeping frogs on the next step}].$$ It holds that

$$p_{j,k} = \frac{1}{(n  -1)^k} \sum_{\ell = j}^k {k \choose \ell} {n - k \choose j} S_{j,k}^{\ell} (k - 1)^{k - \ell},$$
where $S_{j,k}^\ell$ the number of ways to distribute $\ell$ balls into $k$ boxes so that no box is empty. This is given by the formula
$$
S_{j,k}^{\ell} =j! \cdot S(k,l) = \sum_{k = 1}^{j } {j \choose  j - k} (-1)^{j - k} k^{\ell},
$$
where $S(k,\ell)$ is Stirling's number of the second kind.
\end{proposition}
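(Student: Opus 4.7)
The plan is to count favorable outcomes combinatorially among the $(n-1)^k$ equally likely jump profiles of the $k$ currently awake frogs (each awake frog chooses uniformly one of the $n-1$ other vertices of $K_n$, since there are no self-loops here). I will first fix the set of sleeping frogs that are visited, then stratify by the number of awake frogs that land on a sleeping frog, taking care that the remaining awake frogs land strictly on non-sleeping vertices so that the count really is ``exactly $j$ sleeping frogs visited''.

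Concretely, to obtain exactly $j$ sleeping frogs visited I would proceed as follows. First, choose which $j$ of the $n-k$ sleeping frogs are among the visited ones; there are $\binom{n-k}{j}$ such choices. Next, let $\ell$ be the number of awake frogs whose jump lands on one of these $j$ chosen sleeping vertices. I would stratify the count by $\ell$, which must satisfy $j \leq \ell \leq k$ since the $\ell$ landings must cover all $j$ chosen sleeping frogs. There are $\binom{k}{\ell}$ ways to pick which awake frogs perform these ``sleeping'' jumps, and the map from these $\ell$ frogs to the $j$ chosen sleeping vertices must be surjective, contributing $S_{j,k}^\ell$ (the number of surjections from an $\ell$-set to a $j$-set, i.e.\ $j! \cdot S(\ell,j)$). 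The remaining $k-\ell$ awake frogs must land somewhere other than a sleeping vertex; each of them is forbidden from sitting still and from the $n-k$ sleeping vertices, leaving exactly the $k-1$ other awake-frog vertices as options, so this contributes $(k-1)^{k-\ell}$. Multiplying and summing over $\ell$, then dividing by $(n-1)^k$, yields the claimed formula.

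For the second identity, the surjection count $S_{j,k}^\ell = j!\,S(\ell,j)$ follows by standard inclusion–exclusion: subtract over the subsets of the $j$-set that are avoided by the image, giving $\sum_{i=0}^{j}(-1)^i \binom{j}{i}(j-i)^\ell$, which after reindexing $m=j-i$ becomes $\sum_{m=0}^{j}\binom{j}{m}(-1)^{j-m}m^\ell$, matching the stated expression.

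I expect no serious obstacle, but the one place to be careful is justifying that the $k-\ell$ ``non-sleeping'' jumpers contribute $(k-1)^{k-\ell}$ rather than $(n-1-j)^{k-\ell}$ or similar: they are forbidden from \emph{all} $n-k$ sleeping vertices (not just the $j$ chosen ones), because any additional sleeping vertex visited would push the total past $j$, and they are also forbidden from their own vertex, leaving exactly the $k-1$ other awake-frog positions. Once this bookkeeping is pinned down, the rest is purely a matter of assembling the four factors $\binom{n-k}{j}$, $\binom{k}{\ell}$, $S_{j,k}^\ell$, and $(k-1)^{k-\ell}$ and dividing by $(n-1)^k$.
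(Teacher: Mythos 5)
Your proposal is correct and follows essentially the same argument as the paper: stratify by the set of $j$ visited sleeping vertices, the number $\ell$ of awake frogs landing on them (surjectively), and the $(k-1)^{k-\ell}$ placements of the remaining frogs, then divide by $(n-1)^k$; the surjection count is handled by the same inclusion--exclusion. Your extra care in justifying the factor $(k-1)^{k-\ell}$ (the remaining frogs must avoid \emph{all} sleeping vertices and their own) is a welcome clarification of a point the paper passes over quickly.
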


\begin{proof}

Observe that to wake up $j$ more frogs, we can use between $j$ and all $k$ frogs, and distribute them onto $j$ unvisited vertices. So let $\ell$ range between $j$ and $k$.  There are ${k \choose \ell}$ ways to choose $\ell$ of the already awoken frogs to visit the $j$ new vertices. We can choose $j$ new vertices in ${n - k \choose j}$ ways. Once we have chosen the $\ell$ frogs and $j$ new vertices, we can distribute them in (by definition) $S_{j,k}^{\ell}$ ways. Finally the remaining $k - \ell$ frogs can go to any of the $k -1$ already visited vertices they are adjacent to, so they can move in $(k - 1)^{k - \ell}$ ways.  Also, since there are $k$ frogs and we are thinking of them as distinguishable, each one of these events happens with probability $1/(n - 1)^k$.  Thus we get, by summing over all $\ell$, that
$$
p_{j,k }=\frac{1}{(n  -1)^k} \sum_{\ell = j}^k {k \choose \ell} {n - k \choose j} S_{j,k}^{\ell} (k - 1)^{k - \ell}.
$$

We claim that for fixed $\ell$:
$$
S_{j,k}^{\ell} =\sum_{k = 1}^{j } {j \choose  j - k} (-1)^{j - k} k^{\ell}
$$
To see this we proceed by inclusion exclusion principle. Observe $j^{\ell}$ counts the number of ways to distribute $\ell$ distinguishable balls to $j$ distinguishable boxes with some boxes left empty possibly.  So to count the number of ways with no boxes left empty, we should subtract the number with at least one empty, which is ${j \choose j - 1} (j - 1)^{\ell}$ since we have $ j -1$ boxes that we will possibly place balls in, and have $j - 1$ choices for each of the balls. But now all the ways with exactly two boxes left empty have been added once and subtracted twice (since we counted them ${2 \choose 1}$ times in the subtraction), so we should add ${j \choose j -2}(j - 2)^{\ell}$ to count the number of ways with at least two left empty. Keep going in this fashion to get
$$
S_{j,k}^{\ell} = \sum_{ k =1}^{j} {j \choose k} (-1)^{j - k} (j - k)^{\ell} = \sum_{ k =1}^j {j \choose j - k} (-1)^{j- k} (j - k)^{\ell}.
$$

\end{proof}

An explicit formula for time to wake all $n$ frogs on the complete graph with $n$ vertices is defined recursively as follows: \\*

\begin{proposition} \thlabel{prop:formula}
Let $p_{j,k}$ be as in \thref{prop:pformula} and let $p'_{j,k} = \frac {1- p_{0,k} }{p_{0,k}}$. The random variables $\sigma_k$ satisfy the following recursive distributional relationship
\begin{align*}
\sigma_1 &\overset{d}= 1 + \sigma_2,\\
\sigma_k &\overset{d}=
\begin{cases}
      \Geo(1-p_{0,k})+\sum \limits_{j=k+1}^{2k} p'_{j,k} \sigma_{k+j}, & 2 \leq k\leq \frac{n}{2} \\
      \Geo(1-p_{0,k})+\sum \limits_{j=k+1}^{n} p'_{j,k} \sigma_{k+j}, &  \frac{n}{2} < k \leq n-1
   \end{cases}.
\end{align*}
Recall that $T_n = \sigma_1$.
\end{proposition}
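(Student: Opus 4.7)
The plan is to exploit the fact that on $K_n$ the number of awake frogs is itself a Markov chain, with its one-step transition probabilities given precisely by the quantities $p_{j,k}$ computed in \thref{prop:pformula}. This Markov property is a consequence of the vertex-transitivity of $K_n$: any two configurations of $k$ awake frogs differ only by a graph automorphism, so the joint distribution of the next step's destinations depends only on $k$ and not on which particular vertices are presently awake. Once this observation is in hand, the proposition becomes the standard first-passage decomposition of this finite Markov chain absorbed at state $n$.

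Given this setup, I would handle the base case $k = 1$ separately and trivially: the lone awake frog must land on one of the $n-1$ neighbors, each of which hosts a sleeping frog, so $\sigma_1 \overset{d}= 1 + \sigma_2$ with $\sigma_2$ on the right an independent copy. For $k \geq 2$, I would condition on the first time step at which at least one sleeping frog is woken. Because successive transitions are conditionally independent given the (unchanged) state, the number of preceding ``no-activation'' steps is distributed as $\Geo(1 - p_{0,k})$. On the first activating step, Bayes' rule gives that the number of new frogs $J$ has conditional law $\mathbf{P}(J = j \mid J \geq 1) = p_{j,k}/(1 - p_{0,k})$, which I would identify with the weights $p'_{j,k}$ appearing in the statement. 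After this activating step the state becomes $k + J$, and the strong Markov property tells us the remaining waiting time is an independent copy of $\sigma_{k+J}$. Summing these pieces and writing the result as a $p'$-weighted mixture over the values of $J$ yields the claimed distributional identity.

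The split into the two cases $2 \leq k \leq n/2$ and $n/2 < k \leq n-1$ reflects the range of $J$: each of the $k$ awake frogs contributes at most one new activation per step, and there are only $n-k$ sleeping frogs available, so $J \leq \min(k, n-k)$. This gives the upper summation limits $2k$ and $n$ appearing in the two cases. I do not expect any real obstacle in this proof, since all of the combinatorial content has already been absorbed into \thref{prop:pformula}; the main care required is to cleanly separate the geometric waiting time for the first activation from the subsequent strong-Markov restart in state $k + J$, and to verify the admissible range of $J$ so as to match the stated summation bounds.
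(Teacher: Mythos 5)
Your proof is correct and follows essentially the same route as the paper's: decompose at the first step on which some sleeping frog is woken, so that the waiting time is $\Geo(1-p_{0,k})$, then condition to obtain the jump law $p_{j,k}/(1-p_{0,k})$ and restart from state $k+J$ by the Markov property of the awake-frog count. Note that you (rightly) interpret $p'_{j,k}$ as this conditional probability $p_{j,k}/(1-p_{0,k})$ rather than the literal $(1-p_{0,k})/p_{0,k}$ printed in the statement --- evidently a typo, since the printed expression does not depend on $j$ and does not sum to one --- and the paper's own proof makes the same identification.
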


\begin{proof}
The expression for $\sigma_1$ is the observation that after one step there will always be two frogs awake. When $k \geq 2$ frogs are awake the time to wake more frogs is a geometric random variable with mean $1- p_{0,k}$. Conditioned that the $k$ frogs wake another frog, we obtain $j$ more awake frogs with probability $p'_{j,k}$. In this situation we now must wait $\sigma_{k+j}$ steps.
\end{proof}

\section{Appendix}

\subsection*{Phases for argument in \cite{original}}
\begin{enumerate}[label = \roman*.]
    \item The time inform $N$ residents for some fixed constant $N$ (not growing with $n$).
    \item The time to go from $N$ to $\zeta n$ informed residents with $0<\zeta<1$ a fixed constant.
    \item The time to go from $\zeta n$ to $(1- \epsilon)n$ with $\epsilon >0$ a small fixed constant.
    \item The time to go from $(1-\epsilon)n$ to $n-R$ informed residents where $R$ is a large fixed constant.
    \item The time to go from $n-R$ to $n$ informed residents.
\end{enumerate}

\subsection*{Deterministic equation in \cite{second}}
Letting $N(t)$ be the number of informed residents at time $t$:
$$N(t+1) =n- (n-N(t))\exp(-N(t)/n ).$$

\subsection*{Phases for argument in \cite{third}}
\begin{enumerate}[label = \roman*.]
    \item The time inform $\sqrt n$ residents.
    \item The time to go from $\sqrt n $ to $n/2$ informed residents.
    \item The time to go from $n/2$ to $n$ informed residents.
\end{enumerate}

\bibliographystyle{amsalpha}
\bibliography{frog_cover}

\end{document}